%
%
%
%
\documentclass{amsart}

\newtheorem{theorem}{Theorem}[section]
\newtheorem{lemma}[theorem]{Lemma}

\theoremstyle{definition}
\newtheorem{definition}[theorem]{Definition}

\newtheorem{proposition}[theorem]{Proposition}
\theoremstyle{remark}
\newtheorem{remark}[theorem]{Remark}

\numberwithin{equation}{section}



\begin{document}
	
	\title{ Fractal dimension for a class of complex-valued fractal interpolation functions}
	

	
	\author{Manuj Verma}
	\address{Department of Mathematics, IIT Delhi, New Delhi, India 110016}
	\email{manujverma123@gmail.com}
	\author{Amit Priyadarshi}
	\address{Department of Mathematics, IIT Delhi, New Delhi, India 110016}
	
	\email{priyadarshi@maths.iitd.ac.in}
	\author{Saurabh Verma}
	\address{Department of Applied Sciences, IIIT Allahabad, Prayagraj, India 211015 }
	\email{saurabhverma@iiita.ac.in}
	
	

	\keywords{Hausdorff dimension, Box dimension, Packing dimension, Fractal interpolation functions, Iterated function systems}
	\begin{abstract}
		There are many research papers dealing with fractal dimension of real-valued fractal functions in the recent literature. The main focus of the present paper is to study fractal dimension of complex-valued functions. This paper also highlights the difference between dimensional results of the complex-valued and real-valued fractal functions.
		In this paper, we study the fractal dimension of the graph of complex-valued function $g(x)+i h(x)$, compare its fractal dimension with the  graphs of functions $g(x)+h(x)$ and $(g(x),h(x))$ and also obtain some bounds. Moreover, we study the fractal dimension of the graph of complex-valued fractal interpolation function associated with a germ function $f$, base function $b$ and scaling functions $\alpha_k$.
		
	\end{abstract}
	
	\maketitle

	
	
	\section{INTRODUCTION}

	Fractal dimension is one of the major themes in Fractal Geometry. Estimation of fractal dimension of sets and graphs has received a lot of attention in the literature \cite{Fal}.
	In 1986, Barnsley \cite{MF1} introduced the idea of fractal interpolation functions(FIF) and computed the Hausdorff dimension of affine FIF. Estimation of box dimension for a class of affine FIFs presented in \cite{MF6,B&H,M&H}. Several authors \cite{MF3,HM,JV1,Kono,VV} also calculated the fractal dimension of the graph of FIF. In 1991, Massopust \cite{MP} estimated the box dimension of graph of vector-valued FIF. Later  Hardin and Massopust \cite{HM} constructed fractal interpolation functions from $\mathbb{R}^n$ to $\mathbb{R}^m$ and also given the formula for calculating the box dimension.  We encourage the reader to see some recent works on fractal dimension of fractal functions defined on different domains such as Sierpinski gasket \cite{SP,TV}, rectangular domain \cite{SS2} and interval \cite{JV1,VV}. To the best of our knowledge, we may say that there is no work available for dimension of complex-valued fractal functions. Here we give some basic results for complex-valued FIF and provide some results to convince the reader that there is some difference between dimensional result of the complex-valued and real-valued fractal functions.  
	\par In 1986, Mauldin and Williams \cite{w&l} were the poineers who studied the problem of decomposition of the continuous functions in terms of fractal dimensions. They proved the existence of decomposition of  any continuous function on $[0,1]$ into sum of two continuous functions, where each have Hausdorff dimension one. Later in 2000, Wingren \cite{pt} gave a technique  to construct the above decomposition of Mauldin and Williams. Moreover, he proved the same type result of Mauldin and Williams for the lower box dimension. Bayart and Heurteaux \cite{BFY} also  proved similar result for Hausdorff dimension $\beta=2$, and raised the question for $\beta\in [1,2]$. Recently in 2013, Jia Liu and Jun Wu \cite{LW} solved the question which was raised by Bayart and Haurteaux. More preciesly
	they proved that, for any $\beta \in [1,2]$, each continuous function on $[0,1]$ can be decomposed into sum of two continuous functions, where each have Hausdorff dimension $\beta.$ Falconer and Fraser \cite{F&F} found an upper bound for upper box dimension of graph of sum of two continuous functions which depends on dimension of both of graphs.  
	\par
	In \cite{LW,F&F}, it is clear that the Hausdorff dimension of graph of $g+h$ does not depend on the Hausdorff dimension of graph of $g$ and $h$ whereas, the upper box dimension depends on both.
	Motivated from this we think about the behaviour of Hausdorff dimension of  graph of $g+ih$, whether it  depends on the Hausdorff dimension of graphs of $g$ and $h $ or not? We obtained an affirmative  answer for this question. Also the upper box dimension of  $g+ih$ depends on the upper box dimensions of $g$ and $h$ which is quite different from the upper box dimension of $g+h$. Finally, we studied some relations between fractal dimensions of the graphs of $g(x)+ i h(x)$, $g(x)+ h(x)$ and $(g(x), h(x))$.
	
	\par
	The paper is organized as follows. In upcoming section \ref{se2}, we give some preliminary results and required definition for next section. Section \ref{se3} consists of some dimensional results for complex-valued continuous functions and FIFs. In this section, first we establish some propositions and lemmas to form a relation between the fractal dimension of complex-valued and real-valued continuous functions. After that, we determine bound of Hausdorff dimension of $\alpha$-fractal function under some assumption.
	We also obtain some conditions under which $\alpha$-fractal function becomes a H\"{o}lder continuous function and bounded variation function, and calculate its fractal dimension.
	
	\section{preliminaries}\label{se2}
	\begin{definition}
		Let $F$ be a subset of a metric space $(Y,d)$. The Hausdorff dimension of $F$ is defined as follows
		$$ \dim_H{F}=\inf\{\beta>0: \text{for every}~\epsilon>0,~\text{there is a cover}~~ \{V_i\}~\text{of}~F~\text{ with}\sum |V_i|^\beta<\epsilon \}$$

	\end{definition}
	\begin{definition}
		The box dimension of a non-empty bounded subset $F$ of a metric space $(Y,d)$ is defined as
		$$\dim_{B}F=\lim_{\delta \to 0}\frac{\log{N_{\delta}(F)}}{-\log\delta},$$
		where $N_\delta(F)$ is the minimum number of sets of diameter $\delta>0$ that can cover $F.$ 
		If this limit does not exist, then limsup and liminf is known as upper and lower box dimension respectively.
	\end{definition}
	\begin{definition}
		For $r> 0$ and $t\geq0$, let
		$P^t_r(F)=\sup\big \{\sum^{}_i|B_i|^t  \big\}$, where $\{B_i\}$ is a collection of disjoint balls of radii at most $r$ with centres in $F$.
		As $r$ decreases, $P^t_r$ also decreases. Therefore the limit
		$$P^t_0(F)= \lim_{r \to 0}P^t_r(F)$$
		exists. We define
		$$P^t(F)=\inf\bigg\{\sum_iP^t_0(F_i): F\subset\bigcup^{\infty}_{i=1}F_i\bigg\},$$
		and it is known as the $t$-dimensional packing measure. Packing dimension is defined as follows:
		$$\dim_{P}(F)=\inf\{t\geq 0 : P^{t}(F)=0\}=\sup\{t\geq 0 : P^{t}(F)=\infty\}.$$
	\end{definition}
	\textbf{Note-} We denote graph of function $f$ by $G(f)$ throughout this paper.
	\begin{remark}
		$f=f_1+if_2 :[a,b] \to \mathbb{C} $ is a H\"older continuous function with H\"{o}lder exponent $\sigma$ if and only if $f_i$ is also H\"older continuous function with H\"{o}lder exponent $\sigma $ for every $i=1,2$.
	\end{remark}
	\begin{theorem}\cite{Fal}
		If $f: [0,1] \to \mathbb{R}$ is a H\"{o}lder continuous function with the H\"{o}lder exponent $ \sigma\in (0,1)$. Then   $\overline{\dim}_B(G(f))\leq 2-\sigma. $ 
		
	\end{theorem}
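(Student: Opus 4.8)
The plan is to run the standard mesh-counting argument for box dimension, exploiting the Hölder estimate to control the vertical oscillation of $f$ on small intervals. Let $c>0$ denote the Hölder constant, so that $|f(x)-f(y)| \le c\,|x-y|^{\sigma}$ for all $x,y \in [0,1]$. First I would fix a small $\delta \in (0,1)$ and partition $[0,1]$ into roughly $\delta^{-1}$ subintervals of length $\delta$, and on each such subinterval $I$ I would estimate the oscillation $\sup_{x,y\in I}|f(x)-f(y)| \le c\,\delta^{\sigma}$, which is immediate from the Hölder condition since any two points of $I$ are at distance at most $\delta$.

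The key step is to convert this oscillation bound into a count of mesh squares. Over a single subinterval $I$ of length $\delta$, the portion of the graph lying above $I$ is contained in a rectangle of width $\delta$ and height at most $c\,\delta^{\sigma}$; since $\sigma<1$ this height dominates $\delta$, and such a rectangle can be covered by at most $c\,\delta^{\sigma-1}+2$ squares of side $\delta$ taken from the $\delta$-mesh. Summing over the roughly $\delta^{-1}$ subintervals gives the global bound
$$N_{\delta}(G(f)) \le \delta^{-1}\bigl(c\,\delta^{\sigma-1}+2\bigr) = c\,\delta^{\sigma-2} + 2\,\delta^{-1}.$$

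Finally I would insert this estimate into the definition of the upper box dimension. Since $\sigma<1$ forces $\sigma-2 < -1$, the term $c\,\delta^{\sigma-2}$ dominates $2\,\delta^{-1}$ as $\delta \to 0$, so the logarithm of the right-hand side behaves like $(\sigma-2)\log\delta$ up to an additive constant. Hence
$$\overline{\dim}_B(G(f)) = \limsup_{\delta\to 0}\frac{\log N_{\delta}(G(f))}{-\log\delta} \le \limsup_{\delta\to 0}\frac{\log c+(\sigma-2)\log\delta}{-\log\delta} = 2-\sigma,$$
which is the claimed bound. I expect the only genuinely delicate point to be the bookkeeping in the mesh-square count, namely making sure that the additive constants (the $+2$ per column and the ceiling in the number of columns) contribute nothing in the limit; but because $\sigma<1$ pushes the leading exponent strictly below $-1$, these lower-order terms are harmless and the estimate closes cleanly.
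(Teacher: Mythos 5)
Your proposal is correct and is essentially the standard mesh-counting proof of this result as given in the cited reference (Falconer), which is also the argument the paper itself reuses for the upper bound in the proof of Theorem \ref{mainthm}: bound the oscillation on each $\delta$-column by $c\,\delta^{\sigma}$, count at most $c\,\delta^{\sigma-1}+2$ mesh squares per column and $\lceil\delta^{-1}\rceil$ columns, and take logarithms. Your handling of the lower-order terms (the $+2$ per column and the ceiling) is exactly right, so nothing is missing.
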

	
	
	
	\subsection{Iterated Function Systems} Let $(Y, d)$ be a complete metric space, and we denote the family of all nonempty compact subsets of $Y$ by $H(Y)$ . For any $A_1,A_2\in H(Y)$, we define the Hausdorff metric by 
	$$h(A_1,A_2) = \inf\{\delta>0 : A_1\subset {A_2}_\delta~~\text{and}~~ A_2 \subset {A_1}_\delta \} ,$$
	where ${A_1}_\delta$ and ${A_2}_\delta$ denote the $\delta $-neighbourhood of sets $A_1$ and $A_2$, respectively. It is well-known that $(H(Y),h)$ is a complete metric space. 
	\\\textbf{Note-} A map $\theta: (Y,d) \to (Y,d) $  is called a contraction if there exists a constant  $c<1$ such that 
	$$d(\theta(a),\theta(b)) \le c~~d(a,b),~~\forall~~~ a , b \in Y.$$
	\begin{definition}
		The system $\mathcal{I}=\big\{(Y,d); \theta_1,\theta_2,\dots,\theta_N \big\}$ is called an iterated function system (IFS), if each $\theta_i$ is a contraction self-map on $Y$ for $i\in \{1,2,\dots,N\}$.
	\end{definition}
	\textbf{Note-} Let $\mathcal{I}=\big\{(Y,d); \theta_1,\theta_2,\dots,\theta_N \big\}$ be an IFS. We define a mapping $S$ from $H(Y)$ into $H(Y)$ given by
	$$ S(A) = \cup_{i=1}^N \theta_i (A).$$
	The map $S$ is a contraction
	map under the Hausdorff metric $h$. If $(Y,d)$ is a complete metric space therefore, by Banach contraction principle, there exists a unique $E\in H(Y)$ such that $ E = \cup_{i=1}^N \theta_i (E) $ and it is called the attractor of the IFS. We refer the reader to see \cite{MF2,Fal}
	for details.
	\begin{definition}
		We say that an IFS $\mathcal{I}=\{(Y,d);\theta_1,\theta_2,\dots,\theta_N\}$ satisfies the open set condition(OSC) if there is a non-empty open set $O$ with $\theta_i(O) \subset O~~\forall~i\in \{1,2,\cdots,N\}$  and  $ \theta_i(O)\cap \theta_j(O)= \emptyset $ for $i\ne j$. Moreover, if $O \cap E \ne \emptyset,$  where $E$ is the attractor of the $\mathcal{I}$, then we say that  $\mathcal{I}$ satisfies the strong open set condition(SOSC). If $\theta_i(E)\cap \theta_j(E)=\emptyset$ for $i\ne j$, then we say that the IFS $\mathcal{I}$ satisfies the  strong separation condintion(SSC).  
	\end{definition}
	\subsection{Fractal Interpolation Functions}
	Consider a set of data points $ \{(x_i,y_i)\in \mathbb{R}\times\mathbb{C} : i=1,2,\dots,N\} $ with $x_1<x_2<\dots <x_N$. Set $ T = \{1,2,...,N-1\}$ and   $J= [x_1, x_N] .$ For each $ k \in T,$ set $J_k= [x_k, x_{k+1}]$ and let $P_k: J \rightarrow J_k $ be a contractive homeomorphism  satisfying
	$$ P_k(x_1)=x_k,~~P_k(x_N)=x_{k+1}.$$
	For each $ k\in T $, let $\Psi_k: J\times \mathbb{C} \rightarrow \mathbb{C} $ be a continuous  map such that 
	$$ |\Psi_k(t,z_1) - \Psi_k(t,z_2)| \leq \tau_k |z_1- z_2| ,$$
	$$ \Psi_k(x_1,y_1)=y_k, \Psi_k(x_N,y_N)=y_{k+1},$$
	where $(t,z_1), (t,z_2) \in J\times \mathbb{C} $ and $ 0 \leq \tau_k < 1.$  In particular, we can take for each $k\in T$,
	$$P_k(t)=a_k t+ d_k, \quad \Psi_k(t,y) = \alpha_k y + q_k (t).$$
	Constants $a_k$ and $d_k$ are uniquely determined by the condition $ P_k(x_1)=x_k, P_k(x_N)=x_{k+1}.$ The multiplier $\alpha_k$ is called the scaling factor, which satisfies $-1< \alpha_k <1$ and $q_k:J\rightarrow \mathbb{C}$ is a continuous function such that $q_k(x_1)=y_k-\alpha_k y_1$ and $q_k(x_N)=y_{k+1}-\alpha_k y_N$.
	Now for each $k \in T $, we define functions $W_k:J\times \mathbb{C} \rightarrow  J\times \mathbb{C} $ by  $$W_k(t,y)=\big(P_k(t),\Psi_k(t,y)\big). $$
	Then the IFS $\mathcal{J}:=\{J\times \mathbb{C};W_1,W_2,\dots,W_{N-1}\}$ has a unique attractor, \cite[Theorem 1]{MF1}, which is the graph of a function $h$ which satisfying
	the self-referential equation:
	$$h(t)= \alpha_k h \big(P_k^{-1}(t) \big)+ q_k \big(P_k^{-1}(t)\big), ~t \in J_k, k \in T.$$
	The above function $h$ is known as the fractal interpolation function (FIF). 
	\subsection{$\alpha$-Fractal Functions}
	To obtain a class of fractal functions with respect to a given continuous function on a compact interval in $\mathbb{R}$, we can adapt the idea of construction of FIF. The space of all complex-valued continuous functions defined on $J=[x_1,x_N]$ in $\mathbb{R}$ is denoted by $\mathcal{C}(J)$, with the sup norm. Let $f $ be a given function in $ \mathcal{C}(J)$, known as the germ function. For constructing the IFS, we consider the following assumptions
	\begin{enumerate}
		\item Let $\Delta:=\{x_1,x_2,\dots,x_N:x_1<x_2<\dots<x_N\}$ be a partition of $J=[x_1,x_N]$.
		
		\item  Let $\alpha_k: J \rightarrow \mathbb{C}$ be continuous functions with $\| \alpha_k\|_{\infty}=\max\{|\alpha_k(t)|:t\in J\} < 1$, for all $ k \in T $. These $\alpha_k$ are called the scaling functions and  $\alpha= \big(\alpha_1, \alpha_2, \dots, \alpha_{N-1}\big) \in \big(\mathcal{C}(J) \big)^{N-1}$ is called the  scaling vector.
		
		\item Let  $b:J \rightarrow \mathbb{C}$ be a continuous function such that $b \ne f$ and $b(x_1)=f(x_1), b(x_N)= f(x_N)$, named as the base function.
	\end{enumerate}
	Motivated by \cite{MF1,MF2}, Navascu\'{e}s \cite{N2} considered the following set of functons
	\begin{equation} \label{BE1}
		\begin{split}
			P_k(t) =&~ a_k t + d_k, \\
			\Psi_k(t,y)=&~ \alpha_k(t) y + f \big( P_k(t)\big) -\alpha_k(t) b(t).
		\end{split}
	\end{equation}
	Then the corresponding IFS $\mathcal{J}:=\{J\times \mathbb{C};W_1,W_2,\dots,W_{N-1}\}$, where
	$$W_k(t,y) = \Big(P_k(t), \Psi_k(t,y)\Big),$$
	has a unique attractor, which is the graph of a continuous function $f_{\Delta,b}^{\alpha}: J \rightarrow \mathbb{C}$ such that  $f_{\Delta,b}^{\alpha}(x_n)= f(x_n), n=1,2,\dots,N $. For simplicity, we denote $f_{\Delta,b}^{\alpha}$ by $f^\alpha$. The real valued $f^\alpha$ is widely known as $\alpha $-fractal function, see, for instance, \cite{VV,JV1,TV,SS2,JCN}.  Moreover, $f^{\alpha}$ satisfies the following equation
	\begin{equation}\label{FractalEquation}
		f^{\alpha}(t)= f(t)+\alpha_k(P_k^{-1}(t)).(f^{\alpha}- b)\big(P_k^{-1}(t)\big)~~~~\forall~~ t \in J,~~ k \in T.
	\end{equation}
	We can also treat  $f^{\alpha}$ as a ``fractal perturbation" of $f$.
	

	\section{Main Theorems}\label{se3}
	In the following lemma, we provide a relationship between Hausdorff dimension of complex-valued continuous function and Hausdorff dimension of its real and imaginary part. 
	\begin{lemma}\label{new222}
		Suppose $f: [a,b] \rightarrow \mathbb{C}$ is a continuous function and $g,h:[a,b] \to \mathbb{R}$ is real and imaginary part of $f$ respectively, that is,  $f= g +ih $. Then we have
		\begin{itemize}
			\item[(1)] \label{t1} $\dim_H (\text{G}(g+i h))\geq \max\{\dim_H (\text{G}(g)),\dim_H (\text{G}(h)) \}.$ 
			\item[(2)] $\dim_H (\text{G}(g+i h)) = \dim_H( \text{G}(g))$,  provided the imaginary part $ h$ is Lipschitz.
		\end{itemize}
		
	\end{lemma}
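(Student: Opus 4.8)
# Proof Proposal

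The plan is to establish both parts by understanding how the graph of a complex-valued function $f = g + ih$ projects onto the graphs of its components. The key observation is that $\text{G}(f) = \{(x, g(x), h(x)) : x \in [a,b]\}$ sits naturally in $\mathbb{R}^3$, since $\mathbb{R} \times \mathbb{C} \cong \mathbb{R}^3$. The graphs $\text{G}(g)$ and $\text{G}(h)$ in $\mathbb{R}^2$ are then images of $\text{G}(f)$ under the orthogonal projections $\pi_g(x,u,v) = (x,u)$ and $\pi_h(x,u,v) = (x,v)$ respectively. Since Hausdorff dimension cannot increase under a Lipschitz map (and orthogonal projections are $1$-Lipschitz), this immediately yields $\dim_H(\text{G}(g)) \leq \dim_H(\text{G}(f))$ and $\dim_H(\text{G}(h)) \leq \dim_H(\text{G}(f))$, which proves part (1).

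For part (2), I would prove the two inequalities separately. The inequality $\dim_H(\text{G}(g+ih)) \geq \dim_H(\text{G}(g))$ is already contained in part (1). For the reverse inequality $\dim_H(\text{G}(g+ih)) \leq \dim_H(\text{G}(g))$, I would exploit the Lipschitz hypothesis on $h$ by constructing an explicit Lipschitz map from $\text{G}(g)$ onto $\text{G}(f)$. Define $\phi : \text{G}(g) \to \text{G}(f)$ by $\phi(x, g(x)) = (x, g(x), h(x))$. I claim this is Lipschitz: given two points $(x_1, g(x_1))$ and $(x_2, g(x_2))$ on $\text{G}(g)$, the distance between their images differs from the original distance only by the extra coordinate $|h(x_1) - h(x_2)|$, which is bounded by $L|x_1 - x_2|$ using the Lipschitz constant $L$ of $h$. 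A short estimate with the Euclidean metric then shows $\|\phi(p) - \phi(q)\| \leq C \|p - q\|$ for a constant $C$ depending only on $L$. Since $\phi$ maps $\text{G}(g)$ onto $\text{G}(f)$ and Lipschitz maps do not increase Hausdorff dimension, we obtain $\dim_H(\text{G}(f)) \leq \dim_H(\text{G}(g))$.

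Combining the two inequalities gives the desired equality $\dim_H(\text{G}(g+ih)) = \dim_H(\text{G}(g))$, completing part (2). The main technical point to verify carefully is the Lipschitz estimate for $\phi$: one must confirm that adding the coordinate $h(x)$, which varies Lipschitz-continuously with $x$, does not destroy the Lipschitz property when the $x$-displacement is controlled, while noting that along the graph $\text{G}(g)$ the horizontal separation $|x_1 - x_2|$ is itself dominated by the distance in $\text{G}(g)$. I expect the only mild subtlety here is that the roles of $g$ and $h$ are not symmetric: the argument uses the Lipschitz regularity of $h$ to control the newly appended coordinate relative to the $x$-displacement, which is why the conclusion picks out $\dim_H(\text{G}(g))$ specifically and fails in general when $h$ is merely continuous.
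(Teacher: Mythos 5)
Your proposal is correct and follows essentially the same route as the paper: the paper's proof also uses the projection $(x, g(x)+ih(x)) \mapsto (x,g(x))$, shows it is Lipschitz for part (1), and for part (2) shows it is bi-Lipschitz via exactly the estimate $|h(x_1)-h(x_2)|^2 \le L^2|x_1-x_2|^2$ that you describe. The only cosmetic difference is that you phrase the reverse inequality as Lipschitz continuity of the inverse map $\phi(x,g(x)) = (x,g(x),h(x))$, whereas the paper packages both directions as bi-Lipschitz invariance of a single map.
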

	\begin{proof}
		\begin{itemize}
			\item[(1)]  Let us define a mapping $\Phi : G(f) \to G(g)$ as follows
			$$\Phi(x,g(x)+i h(x) )=(x,g(x)).$$
			We aim to show that $\phi$ is a Lipschitz mapping. Using simple properties of norm, it follows that
			\begin{align*}
				\|\Phi&(x_1,g(x_1)+i h(x_1) )-\Phi(x_2,g(x_2)+i h(x_2) )\|^2\\
				=~&\|(x_1,g(x_1))-(x_2,g(x_2))\|^2\\
				=~&|x_1-x_2|^2 +|g(x_1)-g(x_2)|^2\\
				\leq~&|x_1-x_2|^2 +|g(x_1)-g(x_2)|^2
				+|h(x_1)-h(x_2)|^2\\
				=~&\|(x_1,g(x_1)+i h(x_1) )-(x_2,g(x_2)+i h(x_2)\|^2.
			\end{align*}
			That is,  $\Phi$ is a Lipschitz map. Now, Lipschitz invariance property of Hausdorff dimension yields $$\dim_H (\text{G}(f))\geq \dim_H (\text{G}(g)).$$
			On similar lines, we obtain  $$\dim_H (\text{G}(f))\geq \dim_H (\text{G}(h)).$$
			Comibining both of the above inequalities, we get 
			$$\dim_H (\text{G}(f))\geq \max\{\dim_H (\text{G}(g)),\dim_H (\text{G}(h)) \},$$
			completing the proof of item (i).
			\item[(2)]  In this part, we continue our proof with the same mapping $\Phi : G(f) \to G(g),$ defined by 
			$$\Phi(x,g(x)+i h(x) )=(x,g(x)).$$ 
			Here our aim is to show that $\Phi$ is a bi-Lipschitz map.
			From Part (1) of Lemma \ref{t1}, it is obvious that $\Phi$ is a Lipschitz map.
			And 
			\begin{align*}
				\|(x_1&,g(x_1)+i h(x_1) )-(x_2,g(x_2)+i h(x_2)\|^2\\
				=~& |x_1-x_2|^2 +|g(x_1)-g(x_2)|^2
				+|h(x_1)-h(x_2)|^2\\ 
				\leq~&|x_1-x_2|^2 +C_1^2 |x_1-x_2|^2
				+|g(x_1)-g(x_2)|^2\\ 
				\leq~& (1+C_1^2)\{\ |x_1-x_2|^2+|g(x_1)-g(x_2)|^2 \}\\=~&(1+C_1^2)\|(x_1,g(x_1))-(x_2,g(x_2))\|^2\\
				=~& (1+C_1^2) \|\Phi(x_1,g(x_1)+i h(x_1) )-\Phi(x_2,g(x_2)+i h(x_2) )\|^2.
			\end{align*}
			Therefore, $\Phi$ is a bi-Lipschitz map. In the light of the bi-Lipschitz invariance property of Hausdorff dimension, we get
			$$\dim_H(\text{G}(f))= \dim_H (\text{G}(g)),$$
			this completes the proof.
			
		\end{itemize}
		
	\end{proof}
	
	Now, we will present some results similar to the above in terms of other dimensions.
	\begin{proposition}
		Suppose $f: [a,b] \rightarrow \mathbb{C}$  is a continuous function and $g,h:[a,b] \to \mathbb{R}$ is real and imaginary part of $f$ respectively, that is, $f= g +ih $. Then we have
		$$\dim_P (\text{G}(g+i h))\geq \max\{\dim_P (\text{G}(g)),\dim_P (\text{G}(h)) \},$$     $$\overline{\dim}_B (\text{G}(g+i h))\geq \max\{\overline{\dim}_B (\text{G}(g)),\overline{\dim}_B (\text{G}(h)) \},$$                         $$\underline{\dim}_B (\text{G}(g+i h))\geq \max\{\underline{\dim}_B (\text{G}(g)),\underline{\dim}_B (\text{G}(h)) \}.$$
	\end{proposition}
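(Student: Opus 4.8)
The plan is to reuse verbatim the projection maps constructed in the proof of Lemma \ref{new222}, and then invoke the fact that each of the three dimensions appearing in the statement is non-increasing under Lipschitz maps. Concretely, let $\Phi : G(f) \to G(g)$ be the map $\Phi(x, g(x)+i h(x)) = (x,g(x))$, which was already shown to be Lipschitz in Part (1) of Lemma \ref{new222}, and let $\Psi : G(f) \to G(h)$ be the analogous projection $\Psi(x, g(x)+i h(x)) = (x, h(x))$; the identical computation (with $g$ replaced by $h$) shows that $\Psi$ is Lipschitz as well. Both maps are surjective, so $\Phi(G(f)) = G(g)$ and $\Psi(G(f)) = G(h)$. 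Note also that $G(f)$ is bounded, being the graph of a continuous function on a compact interval, so all the dimensions in question are meaningful.

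Next I would record the Lipschitz-monotonicity of the relevant dimensions. If $E$ is a bounded set and $T$ is Lipschitz with constant $c$, then any set of diameter $\delta$ is carried into a set of diameter at most $c\delta$, whence $N_{c\delta}(T(E)) \le N_\delta(E)$. Substituting this into the definitions yields $\overline{\dim}_B(T(E)) \le \overline{\dim}_B(E)$ and $\underline{\dim}_B(T(E)) \le \underline{\dim}_B(E)$. For the packing dimension one uses the modified-box-dimension characterization
$$\dim_P(E) = \inf\Big\{\sup_i \overline{\dim}_B(E_i) : E \subset \bigcup_i E_i\Big\}:$$
if $E \subset \bigcup_i E_i$ then $T(E) \subset \bigcup_i T(E_i)$, and the upper-box estimate just obtained gives $\sup_i \overline{\dim}_B(T(E_i)) \le \sup_i \overline{\dim}_B(E_i)$; taking the infimum over all such covers of $E$ produces $\dim_P(T(E)) \le \dim_P(E)$. (Alternatively, this monotonicity is a standard consequence of the behaviour of packing measures under Lipschitz maps, see \cite{Fal}.)

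Applying these three monotonicity statements to $T = \Phi$ and to $T = \Psi$, each with $E = G(f)$, gives
$$\dim_P(G(g)) \le \dim_P(G(f)), \qquad \dim_P(G(h)) \le \dim_P(G(f)),$$
together with the corresponding pairs of inequalities for $\overline{\dim}_B$ and $\underline{\dim}_B$. Taking the maximum over $g$ and $h$ in each of the three cases yields exactly the displayed inequalities in the statement, since $f = g + i h$.

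The argument is routine once the monotonicity facts are in hand; the only point requiring some care is the packing-dimension case, where one cannot argue directly from a single covering procedure as for Hausdorff dimension but must instead pass through the modified-box-dimension description of $\dim_P$. This is the step I expect to be the main (if minor) obstacle, while everything else is a near-verbatim transcription of the Hausdorff argument in Lemma \ref{new222}.
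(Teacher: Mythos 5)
Your proposal is correct and takes essentially the same approach as the paper, whose proof consists of the remark that the argument is ``similar to part (1) of Lemma \ref{new222}'': namely, project $G(f)$ onto $G(g)$ and onto $G(h)$ via the Lipschitz maps $\Phi$ and $\Psi$, and invoke the fact that packing dimension and upper/lower box dimensions do not increase under Lipschitz maps. Your explicit verification of that monotonicity --- the covering-number estimate for the box dimensions and the modified-box-dimension characterization for $\dim_P$ --- simply fills in the details the paper leaves implicit.
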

	\begin{proof}
		The proof is similar to part (1) of Lemma \ref{t1}, hence we omit.
	\end{proof}
	
	\begin{proposition} Suppose $f: [a,b] \rightarrow \mathbb{C}$  is a continuous function and $g,h:[a,b] \to \mathbb{R}$ is real and imaginary part of $f$ respectively, that is $f= g +ih $. If $h$ is a Lipschitz function, then we have
		$$\dim_P(\text{G}(f))= \dim_P (\text{G}(g)),\overline{\dim}_B(\text{G}(f))= \overline{\dim}_B (\text{G}(g)),$$ and 
		$$\underline{\dim}_B(\text{G}(f))= \underline{\dim}_B (\text{G}(g)).$$
	\end{proposition}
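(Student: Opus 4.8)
The plan is to recycle the bi-Lipschitz map already built in the proof of part (2) of Lemma \ref{new222} and then invoke the bi-Lipschitz invariance of the packing and box dimensions. Concretely, I would once more consider the map $\Phi : G(f) \to G(g)$ given by
$$\Phi(x, g(x) + i h(x)) = (x, g(x)),$$
which is a bijection between the two graphs.

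First I would observe that the hypothesis that $h$ is Lipschitz is precisely what was used in part (2) of that lemma to upgrade $\Phi$ from a mere Lipschitz map to a bi-Lipschitz one. Writing $C_1$ for a Lipschitz constant of $h$, the same two-sided estimate
$$\|\Phi(u) - \Phi(v)\|^2 \leq \|u - v\|^2 \leq (1 + C_1^2)\, \|\Phi(u) - \Phi(v)\|^2, \qquad u, v \in G(f),$$
carries over verbatim, so $\Phi$ is bi-Lipschitz with constants independent of the chosen points.

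Next I would appeal to the classical fact that the packing dimension and both the upper and lower box dimensions are preserved under bi-Lipschitz bijections (see \cite{Fal}). Applying this to $\Phi$ delivers all three equalities at once:
$$\dim_P(G(f)) = \dim_P(G(g)), \quad \overline{\dim}_B(G(f)) = \overline{\dim}_B(G(g)), \quad \underline{\dim}_B(G(f)) = \underline{\dim}_B(G(g)).$$

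I do not anticipate a substantive obstacle. The entire difficulty, namely producing a genuinely bi-Lipschitz map rather than only a Lipschitz one, was already resolved in the lemma, and the Lipschitz assumption on $h$ is exactly the ingredient that makes this possible. The one point worth flagging is that the invariance statement for the box dimensions is slightly less immediate than for the Hausdorff dimension, since box dimension is defined through $\delta$-covers at a common scale; however, this invariance is entirely standard and requires no new idea, so the result is essentially a corollary of the map $\Phi$ constructed earlier.
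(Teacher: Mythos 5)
Your proposal is correct and coincides with the paper's own proof, which simply states that the argument is the same as in part (2) of Lemma \ref{new222}: reuse the map $\Phi(x,g(x)+ih(x))=(x,g(x))$, note that the Lipschitz hypothesis on $h$ makes it bi-Lipschitz, and invoke the bi-Lipschitz invariance of the packing and box dimensions. Nothing further is needed.
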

	
	\begin{proof}
		The proof is similar to part (2) of Lemma \ref{t1}, hence omitted.  
	\end{proof}
	\begin{lemma}
		Suppose $f: [a,b] \rightarrow \mathbb{C}$  is a continuous function and $g,h:[a,b] \to \mathbb{R}$ is real and imaginary part of $f$ respectively, that is, $f= g +ih .$ If $h$ is a Lipschitz function on $[0,1]$, then
		$$\dim_H (\text{G}(g+i h)) =\dim_H (\text{G}(g+ h))=\dim_H (\text{G}(g, h))= \dim_H( \text{G}(g)),$$
		$$\overline{\dim}_B (\text{G}(g+i h)) =\overline{\dim}_B (\text{G}(g+ h))=\overline{\dim}_B (\text{G}(g, h))= \overline{\dim}_B( \text{G}(g)),$$
		$$\dim_P (\text{G}(g+i h)) =\dim_P (\text{G}(g+ h))=\dim_P (\text{G}(g, h))= \dim_P( \text{G}(g)).$$
	\end{lemma}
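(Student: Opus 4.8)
The plan is to reduce all three displayed chains of equalities to two elementary observations, after which everything follows from the bi-Lipschitz (respectively isometric) invariance of the dimensions involved, together with results already established. First I would observe that the graph $G(g+ih)$ of the complex-valued function and the graph $G(g,h)$ of the vector-valued function $(g,h)$ are literally the same subset of $\mathbb{R}^3$ under the canonical isometric identification $\mathbb{C}\cong\mathbb{R}^2$, $u+iv\mapsto(u,v)$: both equal $\{(x,g(x),h(x)):x\in[a,b]\}$. Hence $\dim_H G(g+ih)=\dim_H G(g,h)$, and likewise for upper box and packing dimension, with no hypothesis on $h$ required. Next, since $h$ is Lipschitz, part (2) of Lemma \ref{t1} together with the two preceding propositions already give $\dim_H G(g+ih)=\dim_H G(g)$ and the analogous identities for $\overline{\dim}_B$ and $\dim_P$. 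Thus the only genuinely new equality to establish is $\dim G(g+h)=\dim G(g)$ for each of the three dimension notions.

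To prove this last equality I would define $\Psi:G(g)\to G(g+h)$ by $\Psi(x,g(x))=(x,g(x)+h(x))$ and show it is bi-Lipschitz, mirroring the argument given for $\Phi$ in part (2) of Lemma \ref{t1}. Writing $\Delta x=x_1-x_2$, $\Delta g=g(x_1)-g(x_2)$ and $\Delta h=h(x_1)-h(x_2)$, the squared image distance is $(\Delta x)^2+(\Delta g+\Delta h)^2$ while the squared source distance is $(\Delta x)^2+(\Delta g)^2$. Using the Lipschitz bound $|\Delta h|\le C|\Delta x|$ and the elementary inequality $(a+b)^2\le 2a^2+2b^2$ applied both to $\Delta g+\Delta h$ and to $\Delta g=(\Delta g+\Delta h)-\Delta h$, one obtains a two-sided estimate
\[
c\big((\Delta x)^2+(\Delta g)^2\big)\le (\Delta x)^2+(\Delta g+\Delta h)^2\le C'\big((\Delta x)^2+(\Delta g)^2\big)
\]
with constants $c,C'>0$ depending only on the Lipschitz constant of $h$. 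This shows $\Psi$ is bi-Lipschitz, so by the bi-Lipschitz invariance of Hausdorff, upper box and packing dimension we conclude $\dim G(g+h)=\dim G(g)$ in each case, which closes all three chains.

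The computation is routine; the only point requiring any care is the lower estimate, namely that $\Psi$ does not collapse distances. This is exactly where the Lipschitz hypothesis on $h$ is essential, since it lets us recover $\Delta g$ from $\Delta g+\Delta h$ at the cost of a controlled multiple of $|\Delta x|$; without it the equality $\dim G(g+h)=\dim G(g)$ may fail. I expect no further obstacle, because the identification $G(g+ih)=G(g,h)$ is exact and the remaining equalities with $G(g)$ are quoted directly from the earlier lemma and propositions.
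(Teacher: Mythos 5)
Your proposal is correct and uses essentially the same technique as the paper: every equality is reduced to bi-Lipschitz (or isometric) invariance of Hausdorff, box and packing dimension, with the comparison of $G(g+h)$ and $G(g)$ settled by an explicit graph map whose two-sided estimate exploits the Lipschitz bound on $h$ exactly as in Lemma \ref{t1}. The only organizational difference is that you identify $G(g+ih)$ with $G(g,h)$ isometrically, noting that no hypothesis on $h$ is needed for that equality (a fact the paper records separately as Lemma \ref{l2}), whereas the paper instead maps $G(g,h)$ onto $G(g)$ by a bi-Lipschitz projection using the Lipschitz hypothesis; this is the same underlying argument, and your explicit two-sided estimate for $\Psi:G(g)\to G(g+h)$ is if anything more carefully justified than the paper's citation of earlier computations.
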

	\begin{proof}
		The mapping $\Phi :G(g+h)\to G(g) $ defined by 
		$$\Phi(x,(g(x)+h(x)))=(x,g(x))$$
		is a bi-Lipschitz map, see part(1) of Lemma \ref{t1}. Now bi-Lipschitz invariance property of Hausdorff dimension, gives
		\begin{equation}\label{e1}
			\dim_H (\text{G}(g+ h))=\dim_H( \text{G}(g)).
		\end{equation}
		Again we can show  $\Phi :G(g,h)\to G(g) $ defined by $$\Phi(x,(g(x),h(x)))=(x,g(x)),$$ is a bi-Lipschitz map, see part (2) of Lemma \ref{t1}. By using bi-Lipschitz invariance property of Hausdorff dimension, we get 
		\begin{equation}\label{e2}
			\dim_H (\text{G}(g, h))=\dim_H( \text{G}(g)).
		\end{equation}
		Further, by Lemma \ref{t1}, Equation \ref{e1} and Equation \ref{e2}, we get
		$$\dim_H (\text{G}(g+i h)) =\dim_H (\text{G}(g+ h))=\dim_H (\text{G}(g, h))= \dim_H( \text{G}(g)).$$
		Since upper box dimension, lower box dimension and packing dimension satisfy bi-Lipschitz invariance property, the rest follows.
	\end{proof} 
	\begin{lemma}\label{l2}
		Suppose $g,h :[a,b]\to \mathbb{R}$  are continuous functions. Then $g+ih :[a,b]\to \mathbb{C}$, $(g,h):[a,b] \to \mathbb{R}^2$ are continuous functions and $\dim_HG({g+ih})= \dim_HG{(g,h)}$.  
	\end{lemma}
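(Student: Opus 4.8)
The plan is to observe that, under the canonical identification of $\mathbb{C}$ with $\mathbb{R}^2$, the two graphs $G(g+ih)$ and $G(g,h)$ are not merely of equal dimension but are in fact isometric copies of one another (indeed, the very same subset of $\mathbb{R}^3$). Continuity is immediate: since $g$ and $h$ are continuous real-valued functions, both the complex-valued map $x\mapsto g(x)+ih(x)$ and the vector-valued map $x\mapsto (g(x),h(x))$ are continuous, so both graphs are well-defined compact subsets of their respective ambient spaces.

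First I would set up the identification map $\iota:\mathbb{R}\times\mathbb{C}\to\mathbb{R}\times\mathbb{R}^2$ defined by $\iota(x,u+iv)=(x,u,v)$. The key computation is that $\iota$ is an isometry for the Euclidean metrics, because for any two points the distance is preserved:
\begin{align*}
\big\|(x_1,u_1+iv_1)-(x_2,u_2+iv_2)\big\|^2
&= |x_1-x_2|^2+\big|(u_1+iv_1)-(u_2+iv_2)\big|^2\\
&= |x_1-x_2|^2+(u_1-u_2)^2+(v_1-v_2)^2\\
&= \big\|(x_1,u_1,v_1)-(x_2,u_2,v_2)\big\|^2.
\end{align*}
Thus $\iota$ preserves distances exactly, and in particular it is a bi-Lipschitz bijection (with both Lipschitz constants equal to $1$).

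Next I would verify that $\iota$ carries one graph onto the other. A generic point of $G(g+ih)$ has the form $(x,\,g(x)+ih(x))$, and $\iota(x,\,g(x)+ih(x))=(x,g(x),h(x))$, which is precisely the generic point $(x,(g(x),h(x)))$ of $G(g,h)$. Hence $\iota\big(G(g+ih)\big)=G(g,h)$. Finally, invoking the bi-Lipschitz (here, isometric) invariance of Hausdorff dimension — the same tool used in Lemma \ref{new222} — I would conclude $\dim_H G(g+ih)=\dim_H G(g,h)$.

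There is essentially no hard step here: the statement is really a bookkeeping observation that the standard embedding $\mathbb{C}\cong\mathbb{R}^2$ is an isometry, so the two graphs are congruent as metric subspaces of $\mathbb{R}^3$. The only point that requires any care is making the isometry identification precise so that the conclusion follows cleanly from the metric-invariance of Hausdorff dimension rather than from any genuine geometric argument.
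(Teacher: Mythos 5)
Your proof is correct and is essentially the same as the paper's: the paper defines the map $\Phi(x,g(x)+ih(x))=(x,(g(x),h(x)))$ directly on the graph, verifies by the same norm computation that it preserves distances exactly (hence is bi-Lipschitz), and invokes bi-Lipschitz invariance of Hausdorff dimension, just as you do with your ambient isometry $\iota$. The only cosmetic difference is that you define the identification on all of $\mathbb{R}\times\mathbb{C}$ and restrict, whereas the paper works on the graph from the start.
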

	\begin{proof}
		Let us define a mapping $\Phi:G({g+ih})\to G{(g,h)}$ as follows
		$$\Phi(x,g(x)+ih(x))=(x,(g(x),h(x))).$$ We target to show that $\Phi$ is a bi-Lipschitz map. Performing simple calculations, we have
		\begin{align*}
			\|\Phi&(x_1,g(x_1)+ih(x_1)),\Phi(x_2,g(x_2)+ih(x_2))\|^2\\=
			&\|(x_1,(g(x_1),h(x_1))),(x_2,(g(x_2),h(x_2)))\|^2\\=&|x_1-x_2|^2 +|g(x_1)-g(x_2)|^2
			+|h(x_1)-h(x_2)|^2\\=&\|(x_1,g(x_1)+ih(x_1)),(x_2,g(x_2)+ih(x_2))\|^2.
		\end{align*}
		Therefore, $\Phi$ is a bi-Lipschitz map.
		By using bi-Lipschitz invariance property of Hausdorff dimension, we get $$\dim_HG({g+ih})= \dim_HG{(g,h)}.$$
		Since upper box dimension, lower box dimension and packing dimension also fulfill the bi-Lipschitz invariance property, we complete the proof.

	\end{proof}

	\begin{remark}
		The Peano space filling  curve $\textbf{g}:[0,1] \to [0,1] \times[0,1]$ is $\frac{1}{2}$-H\"older continuous, see details \cite{Kono}. The component functions satisfy $\dim_HG({g_1}) =\dim_HG({g_2})= 1.5 $. On the other hand, we have $\dim_HG(\textbf{g}\big) \geq 2.$ Now, consider a complex-valued mapping $f(x)=g_1(x)+ i g_2(x),$ and using Lemma \ref{l2}, $\dim_HG(f) \geq 2.$ From this example, we say that the upper bound of the Hausdorff dimension for the graph of a complex-valued function cannot be expressed in terms of its H\"{o}lder exponent as we do for real-valued function.  
	\end{remark}
	\par
	From above, it is clear that, dimensional results for complex-valued function and real-valued function are different. Now, we are ready to give some dimensional results for complex-valued fractal interpolation function.

	\par
	Define a metric $D$ on $ J\times \mathbb{C}$ by
	$$D((t_1,z_1),(t_2,z_2))= |t_1-t_2|+|z_1-z_2|~~~~~~~\forall~~(t_1,z_1),(t_2,z_2)\in J\times \mathbb{C}.$$
	Then $\big( J\times \mathbb{C}, D \big)$ is a complete metric space.
	\begin{theorem}
		Let $\mathcal{I}:=\{J\times \mathbb{C};W_1,W_2,\dots,W_{N-1}\}$ be the IFS defined in the construction of $f^\alpha$ such that $$  c_k D((t_1,z_1),(t_2,z_2) ) \le D(W_k(t_1,z_1) , W_k(t_2,z_2)) \le C_k D((t_1,z_1),(t_2,z_2)) ,$$ where  $(t_1,z_1),(t_2,z_2) \in J\times \mathbb{C}$ and $0 < c_k \le C_k < 1 ~ \forall~ k \in T .$ Then $r \le  \dim_H(G(f^{\alpha})) \le R  ,$ where $r$ and $R$ are given by  $ \sum\limits_{k=1}^{N-1} c_k^{r} =1$ and $ \sum\limits_{k=1}^{N-1} C_k^{R} =1$ respectively.
	\end{theorem}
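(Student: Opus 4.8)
The plan is to use that $G(f^{\alpha})$ is exactly the attractor $E$ of the IFS $\mathcal{I}$, so $E=\bigcup_{k=1}^{N-1}W_k(E)$ and, upon iteration, $E=\bigcup_{|\mathbf{i}|=m}W_{\mathbf{i}}(E)$, where for a word $\mathbf{i}=(i_1,\dots,i_m)$ I write $W_{\mathbf{i}}=W_{i_1}\circ\cdots\circ W_{i_m}$. Composing the hypothesised estimates gives $c_{\mathbf{i}}\,D(u,v)\le D(W_{\mathbf{i}}u,W_{\mathbf{i}}v)\le C_{\mathbf{i}}\,D(u,v)$ with $c_{\mathbf{i}}=\prod_{j}c_{i_j}$ and $C_{\mathbf{i}}=\prod_{j}C_{i_j}$; the upper family controls the upper bound and the lower family the lower bound.

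For the upper bound I would run the standard covering argument driven by the $C_{\mathbf{i}}$. Since $\operatorname{diam}(W_{\mathbf{i}}(E))\le C_{\mathbf{i}}\operatorname{diam}(E)$, the family $\{W_{\mathbf{i}}(E):|\mathbf{i}|=m\}$ covers $E$ by sets of diameter at most $(\max_k C_k)^m\operatorname{diam}(E)\to0$, and
\[
\sum_{|\mathbf{i}|=m}\operatorname{diam}(W_{\mathbf{i}}(E))^{R}\le\operatorname{diam}(E)^{R}\sum_{|\mathbf{i}|=m}C_{\mathbf{i}}^{R}=\operatorname{diam}(E)^{R}\Big(\sum_{k=1}^{N-1}C_k^{R}\Big)^{m}=\operatorname{diam}(E)^{R},
\]
using $\sum_k C_k^{R}=1$. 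Letting $m\to\infty$ gives $\mathcal{H}^{R}(E)\le\operatorname{diam}(E)^{R}<\infty$, hence $\dim_H(G(f^{\alpha}))\le R$.

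For the lower bound I would invoke the mass distribution principle. Set $p_k=c_k^{\,r}$; these are probability weights because $\sum_k c_k^{\,r}=1$, and they define a Borel probability measure $\mu$ on $E$ with $\mu(W_{\mathbf{i}}(E))=c_{\mathbf{i}}^{\,r}$ up to overlaps of measure zero. Fix $\rho>0$ and let $c_{\min}=\min_k c_k$. Consider the finite maximal antichain $\Lambda_\rho=\{\mathbf{i}:c_{\mathbf{i}}\le\rho<c_{\mathbf{i}^{-}}\}$, where $\mathbf{i}^{-}$ is $\mathbf{i}$ with its last letter removed; then $c_{\min}\rho<c_{\mathbf{i}}\le\rho$, so $\mu(W_{\mathbf{i}}(E))\le\rho^{\,r}$ for each $\mathbf{i}\in\Lambda_\rho$. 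If a ball $B(x,\rho)$ meets at most a fixed number $Q$ of the pieces $\{W_{\mathbf{i}}(E):\mathbf{i}\in\Lambda_\rho\}$, with $Q$ independent of $x$ and $\rho$, then $\mu(B(x,\rho))\le Q\rho^{\,r}$, and the mass distribution principle yields $\mathcal{H}^{\,r}(E)\ge\mu(E)/Q>0$, i.e.\ $\dim_H(G(f^{\alpha}))\ge r$.

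The bounded-overlap estimate is the step I expect to be the genuine obstacle. The separation it needs comes from the open set condition, which the FIF IFS satisfies automatically: with $O=(x_1,x_N)\times\mathbb{C}$ one has $W_k(O)\subset O$, and since the horizontal projections $P_k\big((x_1,x_N)\big)=(x_k,x_{k+1})$ are pairwise disjoint, $W_k(O)\cap W_j(O)=\emptyset$ for $k\ne j$; thus the sets $\{W_{\mathbf{i}}(O):\mathbf{i}\in\Lambda_\rho\}$ are pairwise disjoint and the pieces $W_{\mathbf{i}}(E)$ can meet only in the finitely many shared interpolation nodes, which are $\mu$-null. Turning this disjointness into a uniform overlap bound $Q$ is delicate precisely because the $W_k$ are merely bi-Lipschitz, not similarities: the distortion $C_{\mathbf{i}}/c_{\mathbf{i}}$ over $\mathbf{i}\in\Lambda_\rho$ need not stay bounded, so one cannot simply compare each $W_{\mathbf{i}}(O)$ with a single ball of radius comparable to $\rho$. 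The route I would pursue is to project onto the base: the intervals $P_{\mathbf{i}}(J)$ have pairwise disjoint interiors for incomparable $\mathbf{i}$, any ball $B(x,\rho)$ projects into a base interval of length at most $2\rho$, and $\operatorname{diam}(W_{\mathbf{i}}(E))\ge c_{\mathbf{i}}\operatorname{diam}(E)$ constrains the relevant pieces; making this count uniform, that is, relating the horizontal lengths $|P_{\mathbf{i}}(J)|$ to $c_{\mathbf{i}}$ and absorbing the distortion, is where the real work of the proof lies.
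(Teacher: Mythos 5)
Your upper bound is complete and is essentially the paper's (the paper simply cites Proposition 9.6 of Falconer for it). The lower bound, however, has a genuine gap: the whole argument rests on the bounded-overlap constant $Q$, which you introduce only hypothetically (``if a ball $B(x,\rho)$ meets at most a fixed number $Q$ of the pieces'') and never establish; you yourself concede that making this count uniform is ``where the real work of the proof lies.'' This is not a deferrable detail. As you correctly observe, the maps $W_k$ are only bi-Lipschitz, so along the stopping family $\Lambda_\rho$ the distortion $C_{\mathbf{i}}/c_{\mathbf{i}}$ is unbounded and the standard inner-ball/outer-ball volume count for self-similar sets under the OSC is unavailable; disjointness of the sets $W_{\mathbf{i}}(V)$ by itself bounds nothing. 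So, as written, the proposal proves only $\dim_H(G(f^{\alpha})) \le R$.

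The paper's proof of the lower bound avoids overlap counting entirely. From the SOSC it extracts a single finite word $i \in T^*$ with $W_i(G(f^{\alpha})) \subset V$; then for every $n$ the sub-IFS $\mathcal{L}_n = \{W_{ji} : j \in T^n\}$ has pairwise disjoint pieces (strong separation), and Falconer's Proposition 9.7 applies --- under strong separation no counting is needed, since a sufficiently small ball centred on the attractor meets only one piece at each level. This gives $\dim_H(G(f^{\alpha})) \ge \dim_H(A_n^*) \ge r_n$, where $\sum_{j \in T^n} c_{ji}^{\,r_n} = 1$, and a short contradiction argument (if $\dim_H(G(f^{\alpha})) < r$, then $c_i^{-r} \ge c_{max}^{\,n(\dim_H(G(f^{\alpha})) - r)}$ for all $n$, which fails for large $n$ since the right side tends to infinity) yields $\dim_H(G(f^{\alpha})) \ge r$. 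For what it is worth, your projection idea can in fact be pushed through: the lower bi-Lipschitz inequality applied to vertical pairs forces $|\alpha_k(t)| \ge c_k > 0$, and then, choosing $z_1,z_2$ with $\Psi_k(t_1,z_1) = \Psi_k(t_2,z_2)$, it also forces $a_k \ge c_k$; hence for $\mathbf{i} \in \Lambda_\rho$ the base intervals $P_{\mathbf{i}}(J)$ have pairwise disjoint interiors and length $a_{\mathbf{i}}|J| \ge c_{\mathbf{i}}|J| > c_{\min}|J|\rho$, so a ball of radius $\rho$, whose horizontal projection has length at most $2\rho$, can meet at most $2 + 2/(c_{\min}|J|)$ of the pieces. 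Either route closes the gap, but one of them has to be carried out; your text does neither.
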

	
	\begin{proof}
		For upper bound of $\dim_H(G(f^{\alpha}))$, Follow Proposition $9.6$ in \cite{Fal}. For the lower bound of $\dim_H(G(f^{\alpha}))$ we proceed as follows.
		
		Let $V = (x_1,x_N) \times \mathbb{C}.$ Then   $$W_i(V) \cap W_{j}(V)=\emptyset,$$ for each  $i\ne j \in T.$ Because  $$P_i\big((x_1,x_N)\big) \cap P_j\big((x_1,x_N)\big)=\emptyset,   ~~~~\forall~~i\ne j \in T.$$ We can observe that  $ V \cap G(f^{\alpha}) \ne \emptyset,$ this implies that  IFS $\mathcal{I}$ satisfies the SOSC. Then there exists an index $i\in T^*$ such that $W_i(G(f^{\alpha}))\subset V,$ where $T^*:=\cup_{n \in \mathbb{N}}\{1,2,\dots, N-1\}^n$. We denote  $ W_i(G(f^{\alpha}))$ by $ (G(f^{\alpha}))_i$ for any $i\in T^*$. Now, it is obvious that for each $n\in \mathbb{N}$, the sets $\{(G(f^{\alpha}))_{ji}: j \in T^n \}$ is  disjoint. Then, for each $n\in \mathbb{N}$, IFS $\mathcal{L}_n=\{W_{ji}: j \in T^n\}$ satisfies the hypothesis of Proposition $9.7$ in \cite{Fal}. Hence, by Proposition $9.7$ in \cite{Fal} if $A_n^* $ is an attractor of the IFS $\mathcal{L}_n,$ then $ r_n \le \dim_H(A_n^*)$, where $r_n$ is given by  $ \sum_{ j \in T^n} c_{ji}^{r_n} =1.$ Then  $  r_n \le \dim_H(A_n^*) \le \dim_H(G(f^{\alpha}))$ because $A_n^*\subset G(f^{\alpha})$. Suppose that $ \dim_H(G(f^{\alpha})) < r.$ This implies that  $ r_n < r $. Let $ c_{max}=\max\{c_1, c_2, \dots,c_{N-1}\}.$ We have
		$$
		c_{i}^{- r_n}  = \sum_{ j \in T^n} c_{j}^{r_n}\ \ge \sum_{ j \in T^n} c_{j}^{r} c_{j}^{\dim_H(G(f^{\alpha})) -r} \ge \sum_{ j \in T^n} c_{j}^{r} c_{max}^{n(\dim_H(G(f^{\alpha})) - r)}. $$
		This implies that $$c_{i}^{- r} \geq c_{max}^{n(\dim_H(G(f^{\alpha})) -r)}. $$ 
		We have a contradiction for large value of $n\in \mathbb{N} $. Therefore, we get $ \dim_H(G(f^{\alpha})) \ge  r,$ proving the assertion.
	\end{proof} 
	\begin{remark}
		In \cite{MR}, Roychowdhury estimated the Hausdorff and box dimension of attractor of the hyperbolic recurrent iterated function system consisting of bi-Lipschitz mappings under the open set condition using Bowen's pressure function and volume argument. Note that recurrent iterated function is a generalization of the iterated function, hence so is Roychowdhury's result. We should emphasize on the fact that in the above we provide a proof without using pressure function and volume argument. Our proof can be generalized to general complete metric spaces. 
	\end{remark}
	\begin{remark}
		This theorem can be compared with Theorem(2.4) in \cite{JV1}. 
	\end{remark}

	The H\"{o}lder space is defined as follows:$$ \mathcal{H}^{\sigma}(J ) := \{h:J \rightarrow \mathbb{C}: ~\text{h is H\"{o}lder continuous with exponent}~ \sigma \} .$$
	Note that $(\mathcal{H}^{\sigma}(J),\|.\|_\mathcal{H})$ is a Banach space, where $ \|h\|_{\mathcal{H}}:= \|h\|_{\infty} +[h]_{\sigma}$ and $$[h]_{\sigma} = \sup_{t_1\ne t_2} \frac{|h(t_1)-h(t_2)|}{|t_1-t_2|^{\sigma}}.$$
	
	
	\begin{theorem}\label{BBVL3}
		Let $f, b, \alpha \in  \mathcal{H}^{\sigma}(J )$ such that $b(x_1)=f(x_1)$ and $b(x_N)=f(x_N).$ Set $c:= \min\{a_k: k \in T \}$. If $  \frac{\|\alpha\|_{\mathcal{H}}}{c^\sigma}< 1 $, then  
		$f^{\alpha}$ is H\"{o}lder continuous with exponent $\sigma$.
	\end{theorem}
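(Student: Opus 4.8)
The plan is to realise $f^{\alpha}$ as the fixed point of the Read--Bajraktarević operator on a complete subspace of the Banach space $\mathcal{H}^{\sigma}(J)$ and to show that this operator is a contraction there, so that its fixed point automatically inherits the Hölder regularity. Concretely, set
$$\mathcal{H}^{\sigma}_f(J):=\{g\in\mathcal{H}^{\sigma}(J): g(x_1)=f(x_1),\ g(x_N)=f(x_N)\}.$$
Since evaluation at a point is $\|\cdot\|_{\mathcal{H}}$-continuous (because $|g(x)|\le\|g\|_{\infty}\le\|g\|_{\mathcal{H}}$), $\mathcal{H}^{\sigma}_f(J)$ is a closed subset of the Banach space $\mathcal{H}^{\sigma}(J)$ and hence a complete metric space in its own right. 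On it I would define $T$ by $(Tg)(t)=f(t)+\alpha_k\big(P_k^{-1}(t)\big)\,(g-b)\big(P_k^{-1}(t)\big)$ for $t\in J_k$; by \eqref{FractalEquation} its unique continuous fixed point is exactly $f^{\alpha}$. The hypotheses $b(x_1)=f(x_1)$ and $b(x_N)=f(x_N)$ are precisely what make $T$ map $\mathcal{H}^{\sigma}_f(J)$ into itself, so the first task is to verify this self-mapping property and the main task is the contraction estimate.

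For the core estimate I would fix $g_1,g_2\in\mathcal{H}^{\sigma}_f(J)$ and write $w=Tg_1-Tg_2$, noting that on each piece $w|_{J_k}=\big(\alpha_k\cdot(g_1-g_2)\big)\circ P_k^{-1}$, since the $f$- and $b$-terms cancel. Two elementary facts then drive everything. First, composing with the affine contraction $P_k^{-1}$, whose slope is $a_k^{-1}$, multiplies the $\sigma$-seminorm over $J_k$ by $a_k^{-\sigma}\le c^{-\sigma}$. Second, $\mathcal{H}^{\sigma}(J)$ is a Banach algebra, so one has the Leibniz-type bound $[\alpha_k(g_1-g_2)]_{\sigma}\le\|\alpha_k\|_{\infty}[g_1-g_2]_{\sigma}+[\alpha_k]_{\sigma}\|g_1-g_2\|_{\infty}$ together with $\|\alpha_k(g_1-g_2)\|_{\infty}\le\|\alpha_k\|_{\infty}\|g_1-g_2\|_{\infty}$. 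Combining these with the identity $\|\alpha_k\|_{\infty}+[\alpha_k]_{\sigma}=\|\alpha_k\|_{\mathcal{H}}$, with $c^{\sigma}\le1$, and with $\|\alpha\|_{\mathcal{H}}=\max_k\|\alpha_k\|_{\mathcal{H}}$, collapses the two contributions into
$$\|w\|_{\mathcal{H}}\le \frac{\|\alpha\|_{\mathcal{H}}}{c^{\sigma}}\,\|g_1-g_2\|_{\mathcal{H}},$$
which is a genuine contraction exactly because of the hypothesis $\|\alpha\|_{\mathcal{H}}/c^{\sigma}<1$.

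The step I expect to be the real obstacle is upgrading the per-piece seminorm bounds to a bound for the seminorm of $w$ over all of $J$: the definition of $[w]_{\sigma}$ ranges over arbitrary pairs $t,t'$, including pairs that straddle an interior node $x_{k+1}$, and for such pairs the naive triangle inequality through the node loses a factor of the form $2^{1-\sigma}$, which would spoil the clean constant $\|\alpha\|_{\mathcal{H}}/c^{\sigma}$. The observation that rescues the argument is that $w$ vanishes at \emph{every} interpolation node: because $g_1-b$ and $g_2-b$ vanish at $x_1$ and $x_N$ (this is where $g_i(x_1)=b(x_1)=f(x_1)$ and $g_i(x_N)=b(x_N)=f(x_N)$ are used) and $P_k^{-1}$ sends the endpoints of $J_k$ to $x_1$ and $x_N$, one gets $w(x_j)=0$ for all $j$. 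A cross-interval difference can then be controlled by one-sided, same-piece differences anchored at these vanishing nodal values, so that the global seminorm is still governed by the per-piece estimate. Making this bookkeeping precise, so that no stray multiplicative factor survives and the constant remains $\|\alpha\|_{\mathcal{H}}/c^{\sigma}$, is the delicate point of the proof.

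Finally, with $T$ shown to be a contraction of the complete space $\mathcal{H}^{\sigma}_f(J)$, the Banach contraction principle produces a unique fixed point there. Since $f^{\alpha}$ is the unique fixed point of $T$ among continuous functions and $\mathcal{H}^{\sigma}_f(J)\subset\mathcal{C}(J)$, the two fixed points coincide; hence $f^{\alpha}\in\mathcal{H}^{\sigma}(J)$, that is, $f^{\alpha}$ is Hölder continuous with exponent $\sigma$, as claimed.
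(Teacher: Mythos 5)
Your proposal follows exactly the paper's own route: the same complete space $\mathcal{H}^{\sigma}_f(J)$, the same Read--Bajraktarevi\'{c} operator, and the same per-piece Leibniz and rescaling estimates; everything up to that point is correct. The genuine gap sits precisely at the step you yourself flag as ``delicate'' and then leave open, and the mechanism you propose for closing it does not work. It is \emph{false} that a continuous function which is $\sigma$-H\"{o}lder with constant $C$ on each piece $J_k$ and which vanishes at every node has global $\sigma$-seminorm at most $C$: the sharp loss factor is $2^{1-\sigma}$, attained by exactly the kind of function your operator produces. Take $w(t)=\min(t-x_k,\,x_{k+1}-t)^{\sigma}$ on $J_k$ and $w(t)=-\min(t-x_{k+1},\,x_{k+2}-t)^{\sigma}$ on $J_{k+1}$; each piece has seminorm $1$ and $w$ vanishes at all nodes, yet for $t_1=x_{k+1}-\epsilon$, $t_2=x_{k+1}+\epsilon$ one gets $|w(t_1)-w(t_2)|=2\epsilon^{\sigma}=2^{1-\sigma}|t_1-t_2|^{\sigma}$. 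Such a $w$ can indeed arise as $Tg_1-Tg_2$ (choose $u=g_1-g_2$ cusp-like near $x_1$ and $x_N$ with suitable signs relative to $\alpha_k,\alpha_{k+1}$, recalling that $P_k^{-1}$ sends the node $x_{k+1}$ to $x_N$ from the left and $P_{k+1}^{-1}$ sends it to $x_1$ from the right), so the Lipschitz constant of $T$ in $\|\cdot\|_{\mathcal{H}}$ can genuinely be of order $2^{1-\sigma}\|\alpha\|_{\mathcal{H}}/c^{\sigma}$, which the hypothesis $\|\alpha\|_{\mathcal{H}}<c^{\sigma}$ does not force below $1$. So the one-step contraction claim with the clean constant cannot be rescued by nodal vanishing alone.

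Two remarks for context. First, the paper's own proof has the identical gap, only hidden: it opens its estimate with the equality $[Sh]_{\sigma}=\max_{k\in T}\sup\{|Sh(t_1)-Sh(t_2)|/|t_1-t_2|^{\sigma}: t_1\neq t_2,\ t_1,t_2\in J_k\}$, silently identifying the global seminorm with the maximum of the per-piece seminorms, which is exactly the false step above; your blind reconstruction is therefore on par with the published argument and more candid about where the difficulty lies. Second, the theorem can still be salvaged along your lines: the $2^{1-\sigma}$ loss is incurred only \emph{once}, not compounded under iteration, since $T^{n}g_1-T^{n}g_2$ again vanishes at all level-$n$ nodes while its per-piece seminorm is bounded by roughly $n\,(\|\alpha\|_{\mathcal{H}}/c^{\sigma})^{n}\|g_1-g_2\|_{\mathcal{H}}$. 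Hence some iterate $T^{n}$ is a contraction on $\mathcal{H}^{\sigma}_f(J)$, and a self-map of a complete metric space with a contractive iterate has a unique fixed point, which must coincide with $f^{\alpha}$. Alternatively, one keeps the one-step argument at the price of the stronger hypothesis $2^{1-\sigma}\|\alpha\|_{\mathcal{H}}<c^{\sigma}$. Either repair would make your (and the paper's) argument complete.
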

	\begin{proof}
		Let us define $ \mathcal{H}^{\sigma}_f(J ):= \{ h \in \mathcal{H}^{\sigma}(J ): h(x_1)=f(x_1), ~h(x_N)=f(x_N) \}.$ 
		By basic real analysis technique, we may see that $\mathcal{H}^{\sigma}_f(J )$ is a closed subset of $\mathcal{H}^{\sigma}(J).$ Since $(\mathcal{H}^{\sigma}(J),\|.\|_\mathcal{H})$ is a Banach space, it implies that $\mathcal{H}^{\sigma}_f(J )$  will be a complete metric space with respect to metric induced by $\|.\|_\mathcal{H}.$ We define a map $S: \mathcal{H}^{\sigma}_f(J) \rightarrow \mathcal{H}^{\sigma}_f(J )$ by $$ (Sh)(t)=f(t)+\alpha_k(P_k^{-1}(t)) ~(h-b)(P_k^{-1}(t))  $$
		$\forall~~ t \in J_k $ where $k \in T.$ We shall show that $S$ is well-defined and contraction map on $\mathcal{H}^{\sigma}_f(J)$.
		\begin{equation*}
			\begin{split}
				[Sh]_\sigma   = &\max_{k  \in T} \sup_{t_1 \ne t_2, t_1,t_2  \in J_k} \frac{|Sh(t_1)-Sh(t_2)|}{|t_1-t_2|^{\sigma}}\\
				\le&  \max_{k  \in T} \Bigg[ \sup_{t_1 \ne t_2, t_1,t_2 \in J_k} \frac{|f(t_1)-f(t_2)|}{|t_1-t_2|^{\sigma}}\\
				& +  \sup_{t_1\ne t_2, t_1,t_2 \in J_k} \frac{|\alpha_k(P_k^{-1}(t_1))| \Big|(h-b)(P_k^{-1}(t_1))-(h-b)(P_k^{-1}(t_2))\Big|}{|t_1-t_2|^{\sigma}}\\ & + \sup_{t_1\ne t_2, t_1,t_2 \in J_k} \frac{|(h-b)(P_k^{-1}(t_2))| \Big|\alpha_k(P_k^{-1}(t_1))-\alpha_k(P_k^{-1}(t_2))\Big|}{|t_1-t_2|^{\sigma}}\Bigg]\\
				\le & ~[f]_{\sigma}+ \frac{\|\alpha\|_{\infty}}{c^{\sigma}} \big( [h]_{\sigma}+[b]_{\sigma} \big)+ \frac{\|h-b\|_{\infty}}{c^{\sigma}} [\alpha]_{\sigma},
			\end{split}
		\end{equation*}
		where $[\alpha]_{\sigma}= \max\limits_{k \in T} \sup\limits_{t_1 \ne t_2, t_1,t_2 \in J} \frac{ |\alpha_k(t_1)-\alpha_k(t_2)|}{|t_1-t_2|^{\sigma}}.$ Let $g, h \in \mathcal{H}^{\sigma}_f(J )$, we have
		\begin{equation*}
			\begin{aligned}
				\|Sg -Sh\|_{\mathcal{H}} &= \|Sg -Sh\|_{\infty} + [Sg-Sh]_{\sigma}\\
				&\le \| \alpha \|_{\infty} \|g -h\|_{\infty} + \frac{\|\alpha\|_{\infty}}{c^{\sigma}} [g-h]_{\sigma}+\frac{\|g-h\|_{\infty}}{c^{\sigma}} [\alpha]_{\sigma}\\
				&\le \frac{\|\alpha\|_{\mathcal{H}}}{c^{\sigma}} \| g-h\|_{\mathcal{H}}.
			\end{aligned}
		\end{equation*}
		This implies that $S$ is well-defined map on $\mathcal{H}^{\sigma}_f(J )$.    
		Since $\frac{\|\alpha\|_{\mathcal{H}}}{c^{\sigma}} < 1$,  $S$ is a contraction map. By the application of Banach contraction mapping theorem, $S$ has a unique fixed point $f^{\alpha} \in \mathcal{H}^{\sigma}_f(J )$. Hence we are done.
	\end{proof}

	\begin{theorem}\label{mainthm}
		Let germ function $f$, base function $b$ and scaling function $ \alpha_j$ be complex-valued functions such that
		\begin{equation}\label{Hypo}
			\begin{aligned}
				& |f(t_1) -f(t_2)| \le l_f |t_1-t_2 |^{\sigma},\\&
				|b(t_1) -b(t_2)| \le l_b |t_1-t_2|^{\sigma},\\&
				|\alpha_j(t_1) -\alpha_j(t_2)| \le l_{\alpha} |t_1-t_2|^{\sigma}
			\end{aligned}
		\end{equation}
		for each $t_1,t_2 \in J ,j \in T,$ and for some $l_f, l_b, l_{\alpha} > 0, \sigma\in (0,1]$. Let $f_1,f_2$ be component of $f$, $b_1,b_2$ be component of $b$,$\alpha_j^{1},\alpha_j^{2}$ be component of $\alpha_j$ and $f^{\alpha}_1,f^{\alpha}_2$ be component of $f^{\alpha}$.   
		Also, consider constants $l_{f_i}, \delta_0> 0$ such that for all $t_1 \in J $ and $\delta < \delta_0$, there exists $t_2\in J $ with $|t_1-t_2| \le \delta$ and $$ |f_i(t_1)-f_i(t_2)| \ge l_{f_i} |t_1 -t_2|^{\sigma}~~\text{ for}~ i\in\{1,2\} .$$
		If ~~~$ \|\alpha\|_{\mathcal{H}}< c^\sigma ~\min\Big\{1,\frac{l_{f_1}-2(\|b\|_{\infty}+M)l_{\alpha}c^{-\sigma} }{2(k_{f,b,\alpha}+l_b)},\frac{l_{f_2}-2(\|b\|_{\infty}+M)l_{\alpha}c^{-\sigma} }{2(k_{f,b,\alpha}+l_b)}\Big\},$  then we have $$1 \le \dim_H\big(G{(f^{\alpha}_i)}\big) \le \dim_B\big(G{(f^{\alpha}_i)}\big) = 2 - \sigma~~~\text{for}~~i=1,2.$$Moreover, $1 \le \dim_H\big(G{(f^{\alpha})}\big) \le \dim_B\big(G{(f^{\alpha})}\big) \ge  2 - \sigma.$
	\end{theorem}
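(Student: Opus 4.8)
The plan is to prove the statement componentwise and then assemble the complex case. Write $f^{\alpha}=f_1^{\alpha}+if_2^{\alpha}$. Since the hypothesis on $\|\alpha\|_{\mathcal{H}}$ forces in particular $\|\alpha\|_{\mathcal{H}}<c^{\sigma}$, Theorem~\ref{BBVL3} applies and $f^{\alpha}\in\mathcal{H}^{\sigma}(J)$; by the remark that a complex function is H\"older of exponent $\sigma$ iff both of its components are, each $f_i^{\alpha}$ is H\"older continuous with exponent $\sigma$. The cited H\"older upper bound for graphs of real H\"older functions then gives $\overline{\dim}_B\big(G(f_i^{\alpha})\big)\le 2-\sigma$. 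For the lower Hausdorff bound, the coordinate projection $(x,f_i^{\alpha}(x))\mapsto x$ is Lipschitz and maps $G(f_i^{\alpha})$ onto $[a,b]$, so $\dim_H\big(G(f_i^{\alpha})\big)\ge 1$; together with the universal inequality $\dim_H\le\dim_B$ this already yields $1\le\dim_H\big(G(f_i^{\alpha})\big)\le\dim_B\big(G(f_i^{\alpha})\big)$. It remains to prove the matching lower box bound $\underline{\dim}_B\big(G(f_i^{\alpha})\big)\ge 2-\sigma$.

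The heart of the argument is to transfer the inverse-H\"older oscillation hypothesis from the germ $f_i$ to the fractal function $f_i^{\alpha}$. Fix a subinterval $J_k$, take $t_1,t_2\in J_k$, and write $u_j=P_k^{-1}(t_j)$, so that $|u_1-u_2|=|t_1-t_2|/a_k\le|t_1-t_2|/c$. The self-referential equation \eqref{FractalEquation} gives
\begin{equation*}
f^{\alpha}(t_1)-f^{\alpha}(t_2)=\big(f(t_1)-f(t_2)\big)+\Big(\alpha_k(u_1)(f^{\alpha}-b)(u_1)-\alpha_k(u_2)(f^{\alpha}-b)(u_2)\Big).
\end{equation*}
Splitting the bracketed complex difference as $\alpha_k(u_1)\big[(f^{\alpha}-b)(u_1)-(f^{\alpha}-b)(u_2)\big]+\big[\alpha_k(u_1)-\alpha_k(u_2)\big](f^{\alpha}-b)(u_2)$ and using $|\mathrm{Re}\,z|,|\mathrm{Im}\,z|\le|z|$, the H\"older bounds \eqref{Hypo}, the seminorm bound $[f^{\alpha}]_{\sigma}\le k_{f,b,\alpha}$ and the sup bound $\|f^{\alpha}\|_{\infty}\le M$ (both finite by Theorem~\ref{BBVL3}), and $|u_1-u_2|\le|t_1-t_2|/c$, I would bound the whole complex correction term by $\big(\|\alpha\|_{\infty}(k_{f,b,\alpha}+l_b)+l_{\alpha}(\|b\|_{\infty}+M)\big)c^{-\sigma}|t_1-t_2|^{\sigma}$. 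Choosing, for a given small $\delta$, the point $t_2$ supplied by the inverse-H\"older hypothesis for $f_i$ (legitimate for $t_1$ at distance $>\delta$ from the endpoints of $J_k$, so that $t_2\in J_k$ as well) and applying the reverse triangle inequality yields
\begin{equation*}
|f_i^{\alpha}(t_1)-f_i^{\alpha}(t_2)|\ \ge\ \Big(l_{f_i}-\tfrac{\|\alpha\|_{\infty}(k_{f,b,\alpha}+l_b)+l_{\alpha}(\|b\|_{\infty}+M)}{c^{\sigma}}\Big)|t_1-t_2|^{\sigma}.
\end{equation*}
The hypothesis on $\|\alpha\|_{\mathcal{H}}$ is exactly what makes the constant in parentheses strictly positive: rearranging it gives $2(k_{f,b,\alpha}+l_b)\|\alpha\|_{\mathcal{H}}+2(\|b\|_{\infty}+M)l_{\alpha}<c^{\sigma}l_{f_i}$, and since $\|\alpha\|_{\infty}\le\|\alpha\|_{\mathcal{H}}$ this (with the factors of $2$ as slack) dominates the quantity subtracted from $l_{f_i}$.

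Thus $f_i^{\alpha}$ inherits an inverse-H\"older condition of exactly the form assumed for the germ, with a positive constant $c_i'$ in place of $l_{f_i}$, and the lower box estimate $\underline{\dim}_B\big(G(f_i^{\alpha})\big)\ge 2-\sigma$ is the standard consequence of such a condition: a continuous real function forced to oscillate at scale $\delta^{\sigma}$ at every small scale requires at least a constant multiple of $\delta^{-(2-\sigma)}$ mesh squares of side $\delta$, and restricting to a single $J_k$ (discarding the boundedly many mesh columns near its endpoints) suffices. Combining with the upper bound gives $\dim_B\big(G(f_i^{\alpha})\big)=2-\sigma$ and hence the componentwise assertion. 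For the complex function, Lemma~\ref{l2} identifies the dimensions of $G(f^{\alpha})$ with those of $G\big((f_1^{\alpha},f_2^{\alpha})\big)$, and the Lipschitz projection $(x,(f_1^{\alpha}(x),f_2^{\alpha}(x)))\mapsto(x,f_i^{\alpha}(x))$ does not increase box dimension, whence $\dim_B\big(G(f^{\alpha})\big)\ge\dim_B\big(G(f_i^{\alpha})\big)=2-\sigma$; the bounds $\dim_H\ge 1$ and $\dim_H\le\dim_B$ are obtained as before, giving the ``moreover'' part.

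The step I expect to be the main obstacle is precisely the transfer estimate. The delicate points are: (i) that taking real or imaginary parts of the complex product $\alpha_k\,(f^{\alpha}-b)$ must not destroy the oscillation coming from $f_i$, which is why one bounds the entire complex correction and then subtracts it, rather than splitting into components first; (ii) ensuring $t_1$ and $t_2$ lie in a common $J_k$ so that a single branch of \eqref{FractalEquation} applies, handled by working away from the finitely many partition points; and (iii) verifying that the positivity of the leftover constant is guaranteed by the stated hypothesis, where the factors of $2$ provide the slack needed to pass from $\|\alpha\|_{\infty}$ to $\|\alpha\|_{\mathcal{H}}$ and to absorb the seminorm bound $k_{f,b,\alpha}$ on $f^{\alpha}$ furnished by Theorem~\ref{BBVL3}.
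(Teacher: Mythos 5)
Your proposal is correct and follows essentially the same route as the paper: Theorem \ref{BBVL3} gives H\"older continuity of $f^{\alpha}$ and hence the upper box bound $2-\sigma$ for each component, while the self-referential equation \eqref{FractalEquation}, the H\"older bounds \eqref{Hypo}, and the inverse-H\"older hypothesis on $f_i$ transfer a positive oscillation constant to $f_i^{\alpha}$ (positivity coming exactly from the assumed bound on $\|\alpha\|_{\mathcal{H}}$), which yields the matching lower box bound via mesh counting. The only real difference is organizational: the paper expands the self-referential equation componentwise (its version for $f^{\alpha}_1$ involves $\alpha_k^1,\alpha_k^2,b_1,b_2,f^{\alpha}_1,f^{\alpha}_2$ separately, producing the factors of $2$ in its constant $L$), whereas you bound the complex correction term by its modulus before taking real or imaginary parts, which gives a slightly sharper constant and makes the stated hypothesis hold with slack; you are also somewhat more explicit than the paper about keeping $t_1,t_2$ in a common $J_k$ and about deducing the ``moreover'' statement from the component result via Lemma \ref{l2} and Lipschitz projections.
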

	\begin{proof}
		Since $ \|\alpha\|_{\mathcal{H}}< c^\sigma$, Theorem (3.11) yields that the fractal version $f^{\alpha}$ of $f$ is H\"older continuous with the exponent $\sigma$. Consequently, $$|f^{\alpha}_i(t_1)-f^{\alpha}_i(t_2)|\leq |f^{\alpha}(t_1)-f^{\alpha}(t_2)| \le k_{f,b,\alpha} |t_1-t_2|^{\sigma}$$ for some $ k_{f,b,\alpha}> 0$ and for $i=1,2.$ 
		Firstly, we try to give an upper bound for upper box dimension of  $G(f^{\alpha}_i) $ for $i=1,2$ as follows:
		For $ \delta \in (0,1) $, let $m$ be the smallest natural number greater than or equal to $\frac{1}{\delta}$  and $N_{\delta}(G({f^{\alpha}_i}))$ be the number of $\delta$-mesh that can intersect with $G(f^{\alpha}_i)$, we have
		\begin{equation}
			\begin{aligned}
				N_{\delta}(G({f^{\alpha}_i})) & \le 2m+ \sum_{r=0}^{m-1}  \Bigg(  { \frac{R_{f^{\alpha}_i}[(r\delta,(r+1)\delta]}{\delta} } \Bigg)\\& \leq 2 \bigg({\frac{1}{\delta}}+1\bigg) + \sum_{r=0}^{m-1}   \frac{R_{f^{\alpha}_i}[r\delta,(r+1)\delta]}{\delta}\\& \leq  2 \bigg({\frac{1}{\delta}}+1\bigg)+ \sum_{r=0}^{m-1}   k_{f,b,\alpha} \delta^{\sigma -1} .
			\end{aligned}
		\end{equation}
		From this, we conclude that
		$$\overline{\dim}_B\big(G(f^{\alpha}_i)\big) =\varlimsup_{\delta \rightarrow 0} \frac{\log N_{\delta}(G(f^{\alpha}_i))}{- \log \delta}\le 2- \sigma~~~~~\forall~~~ i=1,2.$$
		Next, we will prove that  $\underline{\dim}_B\big(G(f^{\alpha}_i)\big) \ge 2 - \sigma~~~~~~~~~\forall~~ i=1,2$. For this, using the self-referential equation, we can write
		\begin{equation}\label{new333}
			\begin{aligned}
				f^{\alpha}_1(t)=&f_1(t)+\alpha_k^1\big(P_k^{-1}(t)\big)  \big[f^{\alpha}_1\big(P_k^{-1}(t)\big) - b_1\big(P_k^{-1}(t)\big)\big]\\&-\alpha_k^2\big(P_k^{-1}(t)\big)  \big[f^{\alpha}_2\big(P_k^{-1}(t)\big)- b_2\big(P_k^{-1}(t)\big)\big]
			\end{aligned}
		\end{equation}
		
		for every $t \in J_k $ and $k \in T.$
		Let  $ t_1 ,t_2 \in J_k $ such that $|t_1-t_2| \le \delta.$ From Equation \ref{new333}, we have
		\begin{align*}
			|f^{\alpha}_1(t_1)- f^{\alpha}_1(t_2)| =& \Big| f_1(t_1)-f_1(t_2) \\&+ \alpha_k^1\big(P_k^{-1}(t_1)\big) ~ f^{\alpha}_1\big(P_k^{-1}(t_1)\big) - \alpha_k^1\big(P_k^{-1}(t_2)\big) ~ f^{\alpha}_1\big(P_k^{-1}(t_2)\big)\\& - \alpha_k^1\big(P_k^{-1}(t_1\big) ~ b_1\big(P_k^{-1}(t_1)\big) + \alpha_k^1\big(P_k^{-1}(t_2)\big) ~ b_1\big(P_k^{-1}(t_2)\big) \\&- \alpha_k^2\big(P_k^{-1}(t_1)\big) ~ f^{\alpha}_2\big(P_k^{-1}(t_1)\big) + \alpha_k^2\big(P_k^{-1}(t_2)\big) ~ f^{\alpha}_2\big(P_k^{-1}(t_2)\big)\\& + \alpha_k^2\big(P_k^{-1}(t_1)\big) ~ b_2\big(P_k^{-1}(t_1)\big) - \alpha_k^2\big(P_k^{-1}(t_2)\big) ~ b_2\big(P_k^{-1}(t_2)\big) \Big|\\
			\ge & | f_1(t_1)-f_1(t_2)| - \|\alpha\|_{\infty} ~ \Big|f^{\alpha}_1\big(P_k^{-1}(t_1)\big) -  f^{\alpha}_1\big(P_k^{-1}(t_2)\big) \Big|\\& - \|\alpha\|_{\infty} ~ \Big|b_1\big(P_k^{-1}(t_1)\big) -   b_1\big(P_k^{-1}(t_2)\big) \Big| - \big(\|b\|_{\infty}+\|f^{\alpha}\|_{\infty}\big)\\& ~\Big|\alpha_k^1\big(P_k^{-1}(t_1)\big)-\alpha_k^1\big(P_k^{-1}(t_2)\big)
			\Big| -\|\alpha\|_{\infty} ~ \Big|f^{\alpha}_2\big(P_k^{-1}(t_1)\big) \\&-  f^{\alpha}_2\big(P_k^{-1}(t_2)\big) \Big| - \|\alpha\|_{\infty} ~ \Big|b_2\big(P_k^{-1}(t_1)\big) -   b_2\big(P_k^{-1}(t_2)\big) \Big|\\& - \big(\|b\|_{\infty}+\|f^{\alpha}\|_{\infty}\big) ~\Big|\alpha_k^2\big(P_k^{-1}(t_1)\big)-\alpha_k^2\big(P_k^{-1}(t_2)\big)
			\Big|.
		\end{align*}
		With the help of Equation (\ref{Hypo}), we get
		\begin{equation*}
			\begin{aligned}
				|f^{\alpha}_1(t_1)- f^{\alpha}_1(t_2)|
				\ge & ~l_{f_1} | t_1-t_2|^{\sigma}-2\|\alpha\|_{\infty} ~ k_{f,b,\alpha}\Big|P_k^{-1}(t_1) -  P_k^{-1}(t_2) \Big|^{\sigma}\\& -2 \|\alpha\|_{\infty}~ l_b \Big|P_k^{-1}(t_1) -   P_k^{-1}(t_2) \Big|^{\sigma}\\&-2 \big(\|b\|_{\infty}+M\big) l_{\alpha} ~\Big|P_k^{-1}(t_1) -   P_k^{-1}(t_2) \Big|^{\sigma}\\
				\ge & ~l_{f_1} | t_1-t_2|^{\sigma}-2\|\alpha\|_{\infty} ~ k_{f,b,\alpha} a^{-\sigma} | t_1-t_2|^{\sigma}\\& - 2\|\alpha\|_{\infty} ~ l_b c^{-\sigma} | t_1-t_2|^{\sigma}\\&- 2\big(\|b\|_{\infty}+M\big)c^{-\sigma} l_{\alpha} ~|t_1-t_2|^{\sigma}\\
				= & ~\Big(l_{f_1}-2(k_{f,b,\alpha}+ l_b)\|\alpha\|_{\infty} c^{-\sigma}-2\big(\|b\|_{\infty}+M\big)c^{-\sigma} l_{\alpha}\Big) | t_1-t_2|^{\sigma}.
			\end{aligned}
		\end{equation*}
		Set  $L:=l_{f_1}-2(k_{f,b,\alpha}+ l_b)\|\alpha\|_{\infty} c^{-\sigma}-2\big(\|b\|_{\infty}+M\big)c^{-\sigma} l_{\alpha}.$ Then by the given condition $L > 0 $. Let $\delta= c^{n}$ for $n\in \mathbb{N}$ and $w$ be the smallest natural number greater than or equal to $\frac{1}{c^m}$. We estimate 
		\begin{equation*}
			\begin{aligned}
				N_{\delta}(G(f^{\alpha}_1)) &\ge  \sum_{r=0}^{w} \max\Big\{1, \big(  c^{-n} R_{f^{\alpha}_1}[r\delta,(r+1)\delta]\big ) \Big\}\\
				& \ge  \sum_{r=0}^{w}  { \big( c^{-n} R_{f^{\alpha}_1}[r\delta,(r+1)\delta] \big)} \\ & \ge  \sum_{r=0}^{w }  {L c^{-n}   c^{n \sigma }}\\
				& =   L c^{n( \sigma-2)}   .
			\end{aligned}
		\end{equation*}
		By using the above inequality for $N_{\delta}(G(f^{\alpha}_1))$, we obtain
		\begin{equation*}
			\begin{aligned}
				\underline{\dim}_B\big(G(f^{\alpha}_1)\big) =\varliminf_{\delta \rightarrow 0}\frac{ \log\Big( N_{\delta}(G(f^{\alpha}_1))\Big)}{- \log (\delta)}
				& \ge \varliminf_{n \rightarrow \infty}\frac{ \log\Big(L  c^{n(\sigma - 2)}   \Big) }{-n \log c}\\ & =
				2- \sigma,
			\end{aligned}
		\end{equation*}
		Similarly, we get $$\underline{\dim}_B\big(G(f^{\alpha}_2)\big) \ge 2 - \sigma,$$
		establishing the result.
	\end{proof}

	\begin{definition}
		A complex-valued function $ g:J \rightarrow \mathbb{C}$ is said to be of bounded variation if the total variation $V(g,J) $ of $g$ defined by $$V(g,J)= \sup_{Q=(y_0,y_1, \dots,y_m) ~~\text{partition of } J}~ \sum_{k=1}^{m} |g(y_i)-g(y_{i-1})|,$$ is finite.
		The space of all bounded variation functions on $J,$ denoted by $\mathcal{BV}(J,\mathbb{C}),$ forms a Banach space with respect to the norm
		$\|g\|_{\mathcal{BV}}:= |g(y_0)|+ V(g,J).$
		
	\end{definition}

	\begin{theorem}
		If $f, b\in \mathcal{C}(J,\mathbb{C})\cap \mathcal{BV}(J,\mathbb{C})$ and $\alpha_k\in \mathcal{C}(J,\mathbb{C})\cap \mathcal{BV}(J,\mathbb{C})~~~\forall~~k\in T$ such that $\|\alpha\|_{\mathcal{BV}}< \frac{1}{2(N-1)}.$ Then $f^\alpha \in  \mathcal{C}(J,\mathbb{C})\cap \mathcal{BV}(J,\mathbb{C})$. Moreover, $\dim_H(G(f^{\alpha}))=\dim_B(G(f^{\alpha}))=1 .$
	\end{theorem}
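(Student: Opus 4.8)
The plan is to split the proof into two independent parts: first establish that $f^{\alpha}$ is continuous and of bounded variation by a Banach fixed-point argument, and then deduce the dimension statement from the fact that the graph of a continuous function of bounded variation is a rectifiable curve. For the first part I would mimic the structure of the proof of Theorem \ref{BBVL3}, replacing the H\"older space by
$$\mathcal{BV}_f(J,\mathbb{C}) := \{h \in \mathcal{C}(J,\mathbb{C})\cap \mathcal{BV}(J,\mathbb{C}) : h(x_1)=f(x_1),\ h(x_N)=f(x_N)\}.$$
Since $\|g\|_{\infty}\le \|g\|_{\mathcal{BV}}$, convergence in the $\mathcal{BV}$-norm forces uniform convergence, so $\mathcal{C}(J,\mathbb{C})\cap\mathcal{BV}(J,\mathbb{C})$ is closed in the Banach space $\mathcal{BV}(J,\mathbb{C})$ and the endpoint conditions survive limits; hence $\mathcal{BV}_f(J,\mathbb{C})$ is a complete metric space. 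On it I would define the Read--Bajraktarevi\'c operator
$$(Th)(t) = f(t) + \alpha_k\big(P_k^{-1}(t)\big)\,(h-b)\big(P_k^{-1}(t)\big), \qquad t \in J_k,\ k \in T,$$
whose unique fixed point is exactly $f^{\alpha}$ by the self-referential equation \eqref{FractalEquation}.

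I first check that $T$ is well defined: each $Th$ is continuous on $J_k$, and the interpolation identities $(h-b)(x_1)=(h-b)(x_N)=0$ force the one-sided values to agree at the interior nodes $x_{k+1}$, so $Th$ is continuous on $J$; the product rule for variation then shows $Th \in \mathcal{BV}$. The heart of the argument is the contraction estimate. Writing $(Tg-Th)(t) = \alpha_k(P_k^{-1}(t))\,(g-h)(P_k^{-1}(t))$ on $J_k$ and using (i) invariance of total variation under the homeomorphic reparametrization $P_k^{-1}:J_k \to J$, (ii) the product inequality $V(uv,J) \le \|u\|_{\infty}V(v,J) + \|v\|_{\infty}V(u,J)$, and (iii) the elementary bounds $\|u\|_{\infty}\le\|u\|_{\mathcal{BV}}$ and $V(u,J)\le\|u\|_{\mathcal{BV}}$, one obtains $V(Tg-Th,J_k) \le 2\|\alpha_k\|_{\mathcal{BV}}\|g-h\|_{\mathcal{BV}}$. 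Because $(Tg-Th)(x_1)=0$, summing over the $N-1$ subintervals gives
$$\|Tg-Th\|_{\mathcal{BV}} = V(Tg-Th,J) \le 2(N-1)\,\|\alpha\|_{\mathcal{BV}}\,\|g-h\|_{\mathcal{BV}},$$
so the hypothesis $\|\alpha\|_{\mathcal{BV}}<\tfrac{1}{2(N-1)}$ makes $T$ a contraction, and Banach's fixed-point theorem delivers $f^{\alpha}\in\mathcal{BV}_f(J,\mathbb{C})$, i.e.\ $f^{\alpha}\in\mathcal{C}(J,\mathbb{C})\cap\mathcal{BV}(J,\mathbb{C})$.

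For the dimension I identify $\mathbb{R}\times\mathbb{C}$ with $\mathbb{R}^3$ and consider the curve $\gamma(t)=(t,f^{\alpha}(t))$, whose image is $G(f^{\alpha})$. Using $\sqrt{a^2+b^2}\le a+b$, its length is bounded by $|J| + V(f^{\alpha},J)<\infty$, so $\gamma$ is rectifiable; equivalently, a direct count gives $N_{\delta}(G(f^{\alpha}))\le (|J|+V(f^{\alpha},J))/\delta + O(1)$, whence $\overline{\dim}_B G(f^{\alpha})\le 1$. On the other hand the projection $(t,z)\mapsto t$ is Lipschitz and maps $G(f^{\alpha})$ onto $J$, so $\dim_H G(f^{\alpha}) \ge \dim_H J = 1$. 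Chaining $1 \le \dim_H \le \underline{\dim}_B \le \overline{\dim}_B \le 1$ forces all three to equal $1$.

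The main obstacle is the bounded-variation contraction estimate: one must control the product $\alpha_k\cdot(g-h)$ in the variation seminorm rather than merely in sup-norm as in the Lipschitz/H\"older arguments, and it is exactly this product step that produces the factor $2(N-1)$ and hence the sharp threshold $\|\alpha\|_{\mathcal{BV}}<\tfrac{1}{2(N-1)}$. Some care is also needed to verify completeness of $\mathcal{BV}_f(J,\mathbb{C})$ and preservation of continuity at the interpolation nodes; once bounded variation is secured, the dimension half is routine.
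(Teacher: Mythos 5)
Your proposal is correct and takes essentially the same route as the paper: the paper's own proof is a one-line deferral to the fixed-point argument of Theorem \ref{BBVL3} and to the real-valued bounded-variation result \cite[Theorem 3.24]{JV1}, which is precisely the Read--Bajraktarevi\'c contraction on the space of continuous bounded-variation functions, followed by the dimension-one conclusion for graphs of such functions, that you carry out. Your write-up merely supplies the details the paper leaves to the citation --- the product-variation estimate producing the factor $2(N-1)$, the completeness of $\mathcal{BV}_f(J,\mathbb{C})$, and the covering count for the rectifiable graph.
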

	\begin{proof}
		Following Theorem (3.11) and \cite[Theorem 3.24 ]{JV1}, one may complete the proof.
	\end{proof}
	\begin{remark}
		The above theorem will reduce to \cite[Theorem 3.24]{JV1} when all functions $f,b $ and $\alpha_k$ are real-valued. 
	\end{remark}

	\bibliographystyle{amsplain}

\end{document}